\newtheorem{theorem}{Theorem}
\begin{document}
\author{Alexander E Patkowski}
\title{An interesting $q$-series related to the $4$-th symmetrized rank function}
\date{\vspace{-5ex}}
\maketitle
\abstract{This paper presents the methods to utilizing the $s$-fold extension of Bailey's lemma to obtain $spt$-type functions
related to the symmetrized rank function $\eta_{2k}(n).$ We provide the $k=2$ example, but clearly illustrate
how deep connections between higher-order spt functions exist for any integer $k>1,$ and provide several directions for possible research. In particular,
we present why the function $spt_M^{*}(n),$ the total number of appearances of the smallest parts of partitions where parts greater than the smallest plus $M$ do not occur, is
an $spt$ function that appears to have central importance.
\\*
\\*
{\it 2010 AMS subject classification: Primary 11P81; Secondary 11P83}\newline
{\it Keywords: partitions, $q$-series, smallest parts function}

\section{Introduction and Main $q$-series}
\par As usual [10], set $(X)_n=(X;q)_{n}:=\prod_{0\le k\le n-1}(1-Xq^{k}).$ In [3], Andrews' contructed an $s$-fold extension of Bailey's lemma,
and obtained many interesting higher dimensional identities. In [5], we find that $spt(n),$ the number of appearances of the smallest parts in the number of partitions of $n,$
satisfies the equation $spt(n)=np(n)-\frac{1}{2}N_2(n).$ Here $N_k(n)=\sum_{m\in\mathbb{Z}}m^kN(m,n),$ where $N(m,n)$ is the number of partitions of $n$ with rank $m$ [1]. 
Here we have kept the usual notation for the number of unrestricted partitions of $n,$ $p(n)$ [1]. \par The results of Garvan [9] extended Andrews' important smallest part identity, allowing for
connections between higher-order spt functions with the symmetrized rank function (defined by equation (7) and [4]) $\eta_{2k}(n),$ for any integer $k>1.$ Other smallest part functions with further restrictions on parts have also been noted in Patkowski [13, 14]. As it turns out, we are able to show there are other spt functions for each $k>1$ that follow from the $k$-fold Bailey lemma. In this sense, the order of the smallest part functions in Garvan's paper [9] correspond to the number of folds in the Bailey lemma in the sense of [3]. This paper will provide the $k=2$ example, which will therefore require only the $2$-fold Bailey lemma.
\\*
\\*
{\bf $2$-fold Bailey's Lemma [3, Theorem 1]} \it We define a pair of sequences $(\alpha_{n_1, n_2},\beta_{n_1, n_2})$ to be a $2$-fold Bailey pair with respect to $a_j,$ $j=1,2,$ if 
		\begin{equation}\beta_{n_1, n_2}=\sum_{r_1\ge0}^{n_1}\sum_{r_2\ge0}^{n_2} \frac{\alpha_{r_1,r_2}}{(a_1q;q)_{n_1+r_1} (q;q)_{n_1-r_1}(a_2q;q)_{n_2+r_2} (q;q)_{n_2-r_2}}.\end{equation}
Furthermore, we have
$$\sum_{n_1\ge0}^{\infty}\sum_{n_2\ge0}^{\infty}(x)_{n_1}(y)_{n_1}(z)_{n_2}(w)_{n_2}(a_1q/xy)^{n_1}(a_2q/zw)^{n_2}\beta_{n_1, n_2}$$
\begin{equation}=\frac{(a_1q/x)_{\infty}(a_1q/y)_{\infty}(a_2q/z)_{\infty}(a_2q/w)_{\infty}}{(a_1q)_{\infty}(a_1q/xy)_{\infty}(a_2q)_{\infty}(a_2q/zw)_{\infty}}\sum_{n_1\ge0}^{\infty}\sum_{n_2\ge0}^{\infty}\frac{(x)_{n_1}(y)_{n_1}(z)_{n_2}(w)_{n_2}(a_1q/xy)^{n_1}(a_2q/zw)^{n_2}\alpha_{n_1, n_2}}{(a_1q/x)_{n_1}(a_1q/y)_{n_1}(a_2q/z)_{n_2}(a_2q/w)_{n_2}}.\end{equation} \rm
In a paper by Joshi and Vyas [11], we find they have studied the special case of the $s$-fold extension. In our interest we will consider their use of the $2$-fold
extension of Bailey's lemma where they have chosen the $\alpha_{n_1, n_2}$ to be $\alpha_{n_1, n_2}=\alpha_n,$ when $n_1=n_2=n,$ and $0$ otherwise. In particular,
we find [11] that $(\alpha_{n_1, n_2},\beta_{n_1, n_2})$ is a $2$-fold Bailey pair with respect to $a_j=1,$ $j=1,2,$ where $\alpha_{0, 0}=1,$

\begin{equation}\alpha_{n_1, n_2}=\begin{cases} (-1)^nq^{n(3n-1)/2}(1+q^n),& \text {if } n_1=n_2=n,\\ 0, & \text{otherwise,} \end{cases}\end{equation}
and 

\begin{equation}\beta_{n_1, n_2}=\frac{1}{(q)_{n_1}(q)_{n_2}(q)_{n_1+n_2}}.\end{equation}
\rm
Differentiating (2) in the same manner as in [13] but appealing to all variables $x,$ $y,$ $z,$ $w,$ when setting equal to $1$ (after setting $a_1=a_2=1$),
\begin{equation}\sum_{n_1\ge1}\sum_{n_2\ge1}(q)_{n_1-1}^2(q)_{n_2-1}^2\beta_{n_1, n_2}q^{n_1+n_2}=\left(\sum_{n\ge1}\frac{nq^n}{1-q^n}\right)^2+\sum_{n_1\ge1}\sum_{n_2\ge1}\frac{\alpha_{n_1, n_2}q^{n_1+n_2}}{(1-q^{n_1})^2(1-q^{n_2})^2}.\end{equation}
Inserting the Joshi and Vyas $2$-fold Bailey pair (3)-(4) into equation (5) and then multiplying through by the generating function for $p(n)$ results in the following $q$-series 
identity.

\begin{theorem} We have,
$$\sum_{n_1\ge1}\left(q^{n_1}+2q^{2n_1}+\cdots\right)\frac{1}{(q^{n_1+1})_{\infty}}\sum_{n_2\ge1}\left(q^{n_2}+2q^{2n_2}+\cdots\right)\frac{1}{(1-q^{n_2+1})\cdots(1-q^{n_2+n_1})}$$
\begin{equation}=\frac{1}{(q)_{\infty}}\left(\sum_{n\ge1}\frac{nq^n}{1-q^n}\right)^2+\frac{1}{(q)_{\infty}}\sum_{\substack{n\in\mathbb{Z}\\ n\neq0}}\frac{(-1)^{n}q^{3n(n+1)/2}}{(1-q^n)^4}.\end{equation}
\end{theorem}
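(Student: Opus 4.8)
The plan is to obtain (6) directly from equation (5) by substituting the Joshi--Vyas pair (3)--(4) and then multiplying throughout by $1/(q)_{\infty}$, the generating function for $p(n)$, after which each piece will match one of the sides of (6).

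First I would simplify the left-hand side of (5). Inserting $\beta_{n_1,n_2}=1/\left((q)_{n_1}(q)_{n_2}(q)_{n_1+n_2}\right)$ and using the elementary reduction $(q)_{n-1}^2/(q)_n=(q)_n/(1-q^n)^2$, the summand collapses to $(q)_{n_1}(q)_{n_2}q^{n_1+n_2}/\left((1-q^{n_1})^2(1-q^{n_2})^2(q)_{n_1+n_2}\right)$. Dividing by $(q)_{\infty}$ and recasting the factors via $q^n/(1-q^n)^2=q^n+2q^{2n}+\cdots$, $(q)_{n_1}/(q)_{\infty}=1/(q^{n_1+1})_{\infty}$, and $(q)_{n_2}/(q)_{n_1+n_2}=1/\left((1-q^{n_2+1})\cdots(1-q^{n_2+n_1})\right)$ should reproduce precisely the double sum on the left of (6).

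On the right-hand side of (5), the squared divisor-sum term already matches the first term of (6) after dividing by $(q)_{\infty}$. For the remaining term, the Bailey pair forces $n_1=n_2=n$, collapsing the double sum to $\sum_{n\ge1}(-1)^n q^{3n(n+1)/2}(1+q^n)/(1-q^n)^4$, where I use the exponent identity $n(3n-1)/2+2n=3n(n+1)/2$.

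The step I expect to demand the most care is folding this one-sided sum into the bilateral sum appearing in (6). Substituting $n=-m$ with $m\ge1$ in $\sum_{n\neq0}(-1)^n q^{3n(n+1)/2}/(1-q^n)^4$ and using $(1-q^{-m})^4=q^{-4m}(1-q^m)^4$ together with $3m(m-1)/2+4m=3m(m+1)/2+m$ turns the negative-index part into $\sum_{m\ge1}(-1)^m q^{3m(m+1)/2}q^m/(1-q^m)^4$. Adding the positive-index part then regenerates exactly the factor $(1+q^n)$, so the collapsed $\alpha$-sum equals $(q)_{\infty}^{-1}\sum_{n\neq0}(-1)^n q^{3n(n+1)/2}/(1-q^n)^4$, which completes the match with (6).
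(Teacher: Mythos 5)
Your proposal is correct and takes essentially the same route as the paper: insert the Joshi--Vyas pair (3)--(4) into (5), multiply through by $1/(q)_{\infty}$, and rearrange, with the identities $(q)_{n-1}^2/(q)_n=(q)_n/(1-q^n)^2$, $(q)_{n_1}/(q)_{\infty}=1/(q^{n_1+1})_{\infty}$, and the folding of $\sum_{n\ge1}(-1)^nq^{3n(n+1)/2}(1+q^n)/(1-q^n)^4$ into the bilateral sum all checking out. The paper states these reductions only implicitly; your write-up supplies exactly the verifications it omits.
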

The first sum in Theorem 1 is a rearrangement of 
$$\frac{1}{(q)_{\infty}}\sum_{n_1,n_2\ge1}\frac{(q)_{n_1-1}^2(q)_{n_2-1}^2q^{n_1+n_2}}{(q)_{n_1}(q)_{n_2}(q)_{n_1+n_2}}.$$
Notice that the last $q$-series of Theorem 1 is the $k=2$ instance of the definition
\begin{equation}\sum_{n\ge0}\eta_{2k}(n)q^n=\frac{1}{(q)_{\infty}}\sum_{\substack{n\in\mathbb{Z}\\ n\neq0}}\frac{(-1)^{n+1}q^{n(3n+1)/2+kn}}{(1-q^n)^{2k}},\end{equation}
where $\eta_{k}(n)$ is the $k$-th symmetrized rank [4]
$$\eta_k(n)=\sum_{j=-n}^{n}\binom{j+\lfloor{\frac{k-1}{2}}\rfloor}{k}N(j,n).$$
 In fact [9], $\eta_{4}(n)=(N_4(n)-N_2(n))/24.$
 For the second component of Theorem 1, we use some notation and results from [6]. Define $\delta_q$ to be the differential operator with respect to $q,$ $q\frac{\partial}{\partial q},$ and let $P=P(q)=\frac{1}{(q)_{\infty}}.$
 Then clearly $\delta_q(P)=P\Phi_1(q),$ where
 $$\Phi_i:=\sum_{n\ge1}\frac{n^iq^n}{1-q^n}.$$ Further, from [6, eq.(4.15)] we note
 \begin{equation}\delta_q^2(P)=-\frac{1}{6}P\left(6\Phi_1^2-5\Phi_3-\Phi_1\right),\end{equation}
 and [6, eq.(4.7)]
  \begin{equation}C_4=2P\left(\Phi_3+6\Phi_1^2\right),\end{equation}
  where [6, eq.(1.34)] $C_j:=\sum_{n\ge1}M_j(n)q^n,$ $M_k(n)=\sum_{m\in\mathbb{Z}}m^kM(m,n),$
  and $M(m,n)$ is the number of partitions of $n$ with crank $m$ [2]. Combining (8) and (9) and some simplification we have that 
  \begin{equation}\sum_{n\ge1}n^2p(n)q^n=\frac{5}{12}C_4-6P\Phi_1^2+\frac{1}{6}P\Phi_1,\end{equation}
  or equivalently,
   \begin{equation}P\Phi_1^2=\frac{5}{72}C_4-\frac{1}{6}\sum_{n\ge1}n^2p(n)q^n+\frac{1}{36}P\Phi_1.\end{equation}

Define the sequences $spt_j^{'}(n),$ and $spt_j^{*}(n)$ to be
\begin{equation}\sum_{n\ge1}spt_j^{'}(n)q^n:=\frac{q^{j}}{(1-q^{j})^2(q^{j+1})_{\infty}},\end{equation}
$$\sum_{n\ge1}spt_j^{*}(n)q^n:=\sum_{n_2\ge1}\left(q^{n_2}+2q^{2n_2}+\cdots\right)\frac{1}{(1-q^{n_2+1})\cdots(1-q^{n_2+j})},$$
respectively. Then $\sum_{j\ge1}spt_j^{'}(n)=spt(n),$ and $spt_j^{*}(n)$ is the total number of appearances of the smallest parts of partitions where parts greater than the smallest plus $j$ do not occur. Therefore, 
\begin{equation} \sum_{n\ge1}spt_j^{+}(n)q^n=\frac{q^{j}}{(1-q^{j})^2(q^{j+1})_{\infty}}\sum_{n_2\ge1}\left(q^{n_2}+2q^{2n_2}+\cdots\right)\frac{1}{(1-q^{n_2+1})\cdots(1-q^{n_2+j})},\end{equation}
where $spt_j^{+}(n)=\sum_{k\le n}spt_j^{*}(k)spt_j^{'}(n-k).$ Summing over $j$ we put $SPT^{+}(n):=\sum_{j\ge1}spt_j^{+}(n),$ the coefficient of $q^n$ of the first $q$-series in Theorem 1. Combining the above computations, we may now claim the following corollary, which we state as our next theorem.

\begin{theorem} For each natural number $n,$ we have,
\begin{equation}SPT^{+}(n)=\frac{5}{72}M_4(n)-\frac{1}{6}n^2p(n)+\frac{1}{36}np(n)-\eta_4(n).\end{equation}

\end{theorem}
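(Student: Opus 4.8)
The plan is to read off the coefficient of $q^n$ on each side of the generating-function identity in Theorem 1. First I would recall that, by the definitions (13)--(14) together with the fact that $q^{n_1}+2q^{2n_1}+\cdots=q^{n_1}/(1-q^{n_1})^2$, the first $q$-series of Theorem 1 is precisely $\sum_{j\ge1}\sum_{n\ge1}spt_j^{+}(n)q^n$ under the identification $j=n_1$; summing over $j$ this is $\sum_{n\ge1}SPT^{+}(n)q^n$. Thus extracting $[q^n]$ from the left side yields $SPT^{+}(n)$ immediately, and the whole task reduces to computing $[q^n]$ of the right side.

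The right side of Theorem 1 splits into two pieces. The first, $\frac{1}{(q)_{\infty}}\left(\sum_{n\ge1}\frac{nq^n}{1-q^n}\right)^2$, is exactly $P\Phi_1^2$, so I would invoke the identity (12) to write
\begin{equation*}
P\Phi_1^2=\frac{5}{72}C_4-\frac{1}{6}\sum_{n\ge1}n^2p(n)q^n-\frac{5}{36}P\Phi_1.
\end{equation*}
Taking $[q^n]$ here rests on three routine coefficient readings: $[q^n]C_4=M_4(n)$ by the definition of $C_j$; $[q^n]\sum_{m\ge1}m^2p(m)q^m=n^2p(n)$; and, since $\delta_q(P)=P\Phi_1$ with $\delta_q=q\,d/dq$ acting on $P=\sum_m p(m)q^m$ by $\delta_q(P)=\sum_m m\,p(m)q^m$, we have $[q^n]P\Phi_1=np(n)$. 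This produces the terms $\frac{5}{72}M_4(n)-\frac{1}{6}n^2p(n)-\frac{5}{36}np(n)$.

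The only step requiring genuine attention is the second piece, $\frac{1}{(q)_{\infty}}\sum_{n\neq0}\frac{(-1)^nq^{3n(n+1)/2}}{(1-q^n)^4}$, which I claim equals $\sum_{n\ge0}\eta_4(n)q^n$, i.e. the $k=2$ case of (7). The exponents do not match term by term ($3n(n+1)/2$ versus $n(3n+1)/2+2n$), so I would reindex the summation by replacing $n$ with $-n$: since $(-1)^{-n}=(-1)^n$, $\;3(-n)(-n+1)/2=(3n^2-3n)/2$, and $1-q^{-n}=-q^{-n}(1-q^n)$ gives $(1-q^{-n})^4=q^{-4n}(1-q^n)^4$, each summand transforms into $(-1)^nq^{(3n^2+5n)/2}/(1-q^n)^4$, and $(3n^2+5n)/2=n(3n+1)/2+2n$ is exactly the exponent appearing in (7) with $k=2$. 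Hence $[q^n]$ of this piece is $\eta_4(n)$.

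Collecting the four contributions and equating the result with $SPT^{+}(n)$, the coefficient extracted from the left side, gives the claimed formula. The main obstacle is the index-reflection identification of the second term with the symmetrized rank generating function (7); everything else is coefficient bookkeeping driven by (12) and the definitions of $C_4$, $\delta_q$, and $\eta_4$.
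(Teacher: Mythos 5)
Your proposal is correct and follows essentially the same route as the paper: read off $[q^n]$ from both sides of Theorem 1, identify the first term on the right with $P\Phi_1^2$ and apply the identity (11) together with $[q^n]C_4=M_4(n)$ and $[q^n]P\Phi_1=np(n)$, and recognize the bilateral sum as the $k=2$ case of (7). The only place you add anything is in writing out the reindexing $n\mapsto -n$ that matches the exponent $3n(n+1)/2$ with $n(3n+1)/2+2n$, a step the paper asserts without detail; otherwise the two arguments coincide.
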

\section{Some Consequences and Concluding Remarks}
A natural consequence of Theorem 2 and known congruences for $p(n),$ $M_4(n),$ and $\eta_4(n)$ is the following result.

\begin{theorem} We have,
\begin{equation}SPT^{+}(7n)\equiv0\pmod{7},\end{equation}
\begin{equation}SPT^{+}(11n)\equiv0\pmod{11}.\end{equation}
\end{theorem}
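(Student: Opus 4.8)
The plan is to treat the formula (14) of Theorem 2 as an identity between integers and then specialize it to the two progressions. First I would clear denominators: since $\mathrm{lcm}(72,6,36)=72$, the identity (14) becomes
\begin{equation}
72\,SPT^{+}(n)=5M_4(n)-12\,n^2p(n)-10\,n\,p(n)+72\,\eta_4(n),
\end{equation}
valid for every $n$. Because $\gcd(72,7)=\gcd(72,11)=1$, the integer $72$ is invertible modulo $7$ and modulo $11$, so it is enough to show that the right-hand side above is divisible by $7$ after the substitution $n\mapsto 7n$, and by $11$ after $n\mapsto 11n$; dividing by the unit $72$ then returns (15) and (16).

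Next I would dispose of the two terms carrying a factor of $p$, which is the routine part. Under $n\mapsto 7n$ the summands $-12(7n)^2p(7n)$ and $-10(7n)p(7n)$ are manifestly divisible by $7$ (the first even by $49$), and likewise under $n\mapsto 11n$ they are divisible by $11$; no arithmetic property of $p(n)$ beyond these explicit weights is needed. This reduces (15) and (16) to the moment congruences
\begin{equation}
5M_4(7n)+72\,\eta_4(7n)\equiv0\pmod 7,\qquad 5M_4(11n)+72\,\eta_4(11n)\equiv0\pmod{11}.
\end{equation}
In particular it suffices to establish the stronger componentwise statements $M_4(7n)\equiv\eta_4(7n)\equiv0\pmod 7$ and $M_4(11n)\equiv\eta_4(11n)\equiv0\pmod{11}$.

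For the symmetrized rank part I would use $\eta_4=(N_4-N_2)/24$ from [9]; since $24$ is a unit modulo both primes, the congruences for $\eta_4$ are equivalent to $N_4(7n)\equiv N_2(7n)\pmod 7$ and $N_4(11n)\equiv N_2(11n)\pmod{11}$, i.e. to Ramanujan-type congruences for the rank moments. For the crank part I would exploit the quasimodular expression $C_4=2P(\Phi_3+6\Phi_1^2)$ of (9), together with the generating function (7) for the $\eta_{2k}$, and extract the coefficients along $n\equiv0$ (for instance via the operator $U_p$ with $p=7,11$). Each of these is a congruence of the type recorded in the rank- and crank-moment literature built on [4], [6], [9], and I would cite them in exactly the form just stated.

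The hard part will be precisely this last input: securing the moment congruences in the progressions $n\equiv0\pmod 7$ and $n\equiv0\pmod{11}$, rather than in a shifted Ramanujan progression. Modulo $7$ this should follow without much friction from the rank/crank dissection that underlies $p(7n+5)\equiv0\pmod 7$. Modulo $11$ it is more delicate, since the ordinary $spt$-function admits no analogous clean congruence modulo $11$; here one cannot transplant an $spt$-type argument and must instead argue directly from the Hecke action on the quasimodular forms attached to $C_4$ and to the series in (7). Once these congruences are in hand, reducing the first displayed identity modulo $7$ and modulo $11$ and cancelling the unit $72$ completes the proof of (15) and (16).
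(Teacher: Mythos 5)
Your reduction is exactly the one the paper relies on: multiply (14) by $72$ (a unit modulo $7$ and $11$), observe that under $n\mapsto 7n$ and $n\mapsto 11n$ the two terms involving $p(n)$ carry explicit factors of $7$ and $11$ respectively, and conclude that everything rests on the four congruences $M_4(7n)\equiv\eta_4(7n)\equiv0\pmod 7$ and $M_4(11n)\equiv\eta_4(11n)\equiv0\pmod{11}$. Up to that point your argument is correct and coincides with what the paper does implicitly when it says ``the result follows.''

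The gap is what comes after: you leave precisely these four congruences as ``the hard part,'' to be settled either by an unspecified citation or by a Hecke-action argument on quasimodular forms that you never carry out. But those congruences \emph{are} the entire content of the theorem, and the paper's proof consists exactly of supplying them from the literature: Garvan's higher-order $spt$ paper (reference [9] in the bibliography, miscited as ``[8]'' in the proof) gives $M_4(7n)\equiv0$, $M_2(7n)\equiv0$ and $spt_2(7n)\equiv0\pmod 7$, from which $\eta_4(7n)\equiv0\pmod 7$ follows via Garvan's identity $spt_2(n)=\mu_4(n)-\eta_4(n)$ (the symmetrized crank moment $\mu_4=(M_4-M_2)/24$, with $24$ a unit modulo $7$), and it also gives $M_4(11n)\equiv0\pmod{11}$; the congruence $\eta_4(11n)\equiv0\pmod{11}$ is a theorem of Bringmann, Garvan and Mahlburg (reference [8], miscited as ``[7]''). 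So your worry that the mod-$11$ case might be out of reach is resolved not by building a new Hecke-theoretic argument but by quoting their result --- which is indeed proved with Maass-form machinery, so your instinct about the required tools was sound, but a proof proposal cannot end with ``this input still needs to be secured.'' Note also that your suggested route to $\eta_4(7n)\equiv0\pmod 7$ through rank moments, i.e.\ $N_4(7n)\equiv N_2(7n)\pmod 7$ via $\eta_4=(N_4-N_2)/24$, differs from the paper's route through crank moments and $spt_2$; it is plausible, but you would still have to locate or prove that rank-moment congruence, which your proposal does not do.
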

\begin{proof} To see (15) note from Garvan [9], that $M_4(7n)\equiv0\pmod{7},$ $spt_2(7n)\equiv0\pmod{7},$ $M_2(7n)\equiv0\pmod{7},$ and therefore $\eta_4(7n)\equiv0\pmod{7},$ and the result follows. To see (16) note from K. Bringmann, F. G. Garvan and K. Mahlburg [8] $\eta_4(11n)\equiv0\pmod{11},$ and Garvan [9] that $M_4(11n)\equiv0\pmod{11}.$ Collecting these properties gives the result.
\end{proof}

If we were to delve deeper into $SPT^{+}(n)$ and use the ``crank component" akin to the study done in [14], we could easily construct a relation to Garvan's second order function (see [9] for a definition) $spt_2(n).$ This is easily 
done using his relation $spt_2(n)=\mu_4(n)-\eta_4(n).$
\par A $1$-fold Bailey pair (i.e. Bailey pair) $(\alpha_n, \beta_n)$ with $\beta_n=1/((q)_n(q)_{n+m}),$ for $m\in\mathbb{N}$ has been obtained in [15]. The $q$-series therein is \begin{equation}\sum_{n_2\ge1}\left(q^{n_2}+2q^{2n_2}+\cdots\right)\frac{1}{(1-q^{n_2+1})\cdots(1-q^{n_2+j})},\end{equation}
and is a natural generalization of Andrews' $spt(n).$ This identity is follows from the $1$-fold Bailey, and has come about in our study is the same sense as in [12]. A clear and interesting area for further research that has come about from our study, is to extend the results of Andrews [5] to obtain the divisibility properties of $spt_M^{*}(n).$ This is quite challenging, and perhaps the ``easiest" angle is through purely combinatorial methods. Another area of needed research is to further solidify the connection of the dimension characterized in [3] and the order characterized in [9], which was first started in this study.

1390 Bumps River Rd. \\*
Centerville, MA
02632 \\*
USA \\*
E-mail: alexpatk@hotmail.com
\end{document}